\def \N {\mathbb{N}}
\def \Z {\mathbb{Z}}
\def \A {\mathcal{A}}
\def \Fwd {\operatorname{Fwd}}
\def \rat {{\cal RWP}}
\newtheorem{theorem}{Theorem}
\newtheorem{lemma}[theorem]{Lemma}
\newtheorem{corollary}[theorem]{Corollary}
\begin{document}

\begin{center}
{\sc\large Inverse semigroups with rational word problem are finite}

{\small Tara Brough}
\footnote{School of Mathematics and Statistics, Mathematical Institute, North Haugh, St Andrews, Fife KY16 9SS, Scotland.  Email: tara@mcs.st-andrews.ac.uk.} 
\end{center}

\abstract{This note proves a generalisation to inverse semigroups of Anisimov's theorem that a group has regular word problem if 
and only if it is finite, answering a question of Stuart Margolis.  The notion of word problem used is the two-tape word problem --
the set of all pairs of words over a generating set for the semigroup which both represent the same element.}

\section{Introduction}

The word problem of a semigroup is, informally, the problem of deciding whether two words over some finite generating set 
represent the same element of the semigroup.  This problem was shown to be undecidable in general for finitely presented 
semigroups by Post \cite{Post},
prior to the establishment of the undecidability of the word problem for finitely presented groups by Novikov \cite{Nov} and Boone \cite{Boo}.

Recent research on word problems of groups and semigroups has centred on word problems which are not only decidable, but 
in some sense `easily' decidable, for example recognisable by finite or pushdown automata.

Anisimov \cite{Ani} introduced the perspective of considering the word problem of a group $G$ with respect to a finite generating set $X$ 
as a formal language, namely the language $W(G,X)$ of all strings over $X$ representing the identity element of $G$.  
Anisimov showed that the word problem of a group is regular (recognised by a finite automaton) if and only if it is finite \cite{Ani}, 
a result known as Anisimov's Theorem.  
He also proposed as an open problem the classification of groups with
context-free word problem, resolved by Muller and Schupp \cite{MulSch} (together with a slightly later result of Dunwoody \cite{Dun}): 
they are precisely the finitely generated virtually free groups.

For a group $G$, the language $W(G,X)$ captures all the information 
required to determine whether two strings over $X$ represent the same element of $G$, but for semigroups a different notion is required, due to the 
lack of inverses and the relative structural unimportance of an identity element, even when present.
The notion of word problem considered in this note is the \emph{two-tape word problem}.  This is the set of all pairs of strings
$(u,v)$ over a generating set for the semigroup $S$ such that $u$ and $v$ represent the same element of $S$.
A semigroup has \emph{rational word problem} if its two-tape word problem is accepted by a two-tape asynchronous finite automaton
(defined in Section~\ref{rat} below).  We denote the class of semigroups with rational word problem by $\rat$. 

In an article establishing much of the basic theory of semigroups with rational word problem, Neunh\"offer, Pfeiffer and Ru\v{s}kuc~\cite{NPR} 
showed that the only groups in $\rat$ are the finite groups,
and hence rational word problem is a genuine generalisation to semigroups of regular group word problem.
The class $\rat$ contains many infinite semigroups, free semigroups being one very straightforward example.
There is as yet no conjecture as to the general structure of semigroups in $\rat$.  All known examples are `not far from free',
but it is difficult to pin down exactly what we mean by this.   

Following a talk by the author on Green's relations for semigroups with rational word problem
(work in progress jointly with Pfeiffer; some results are contained in Pfeiffer's doctoral thesis \cite{Pfe}),
Stuart Margolis asked whether $\rat$ contains any infinite inverse semigroups.
Inverse semigroups are sufficiently far from being free that a negative answer seemed most likely, and the aim of this note 
is to show that indeed the only inverse semigroups in $\rat$ are finite.  This generalises Anisimov's Theorem 
to inverse semigroups.  The proof is elementary and requires few ingredients.  We introduce some basic theory of the class $\rat$ 
in Section~\ref{rat}, inverse semigroups and particularly the classification of the monogenic inverse
semigroups in Section~\ref{mono}, and finally prove the result in Section~\ref{proof}.

\section{Semigroups with rational word problem}\label{rat}

For a semigroup $S$ generated by a finite set $A$, the \emph{two-tape word problem} (henceforth referred to as the word problem) 
of $S$ with respect to $A$ is the relation
\[ \iota(S,A) = \{ (u,v)\in A^+\times A^+ \mid u =_S v \}, \]
where $A^+$ denotes the set of all non-empty strings over $A$.
The notation comes from the fact that this is the lift of the equality relation (often denoted $\iota$) on $S$ to $A^+$.
If $\pi_A: A^+\to S$ is the projection associated to the generating set $A$,
then $\iota(S,A) = \ker \pi_A$.
Similarly, the word problem of a group $G$ with respect to a generating set $X$ is the kernel of the
projection from the free group on $X$ to $G$.  Kernels of semigroup homomorphisms are relations rather
than sets, which explains the apparent difference between the two definitions.

A \emph{two-tape asynchronous finite state automaton} (AFSA) is a tuple $\A = (Q,q_0,A,F,\delta)$ with
\begin{itemize}
\item $Q$ a finite set of \emph{states} with distinguished start state $q_0\in Q$,
\item $A$ a finite set called the \emph{alphabet},
\item $F\subseteq A$ the set of \emph{final states},
\item the \emph{transition relation} $\Delta\subseteq Q\times (A\cup \{\epsilon\}) \times (A\cup \{\epsilon\})\times Q$.
\end{itemize}

The automaton $\A$ takes as input a pair of words $(u,v)\in A^+\times A^+$.  Starting at $q_0$, the automaton reads
the pair of words asynchronously from left to right, meaning that at any step it can read the `current' symbol from 
either tape, depending on what is allowed by the transition relation $\Delta$: if $\A$ is in state $p$,
then for any $(p,a,b,q)\in \Delta$, the automaton can move to state $q$ if the current symbols on the first and second tapes
are $a$ and $b$ respectively.  If $a$ or $b$ is $\epsilon$, this means that we do not consume input from the corresponding tape
in moving to $q$.  A pair $(u,v)$ is accepted by $\A$ if it is possible for $\A$ to finish in a final state after reading $(u,v)$.
 
A relation $\rho\subseteq A^+\times A^+$ is \emph{rational} if it is recognised by an AFSA (that is, if there is an AFSA $\A$ such that 
the set of all pairs of words accepted by $\A$ is precisely $\rho$).
For word problems of semigroups, the property of being rational is independent of the choice of finite generating set \cite[Corollary~5.4]{NPR}
and hence we can say that a semigroup has \emph{rational word problem} if its word problem with respect to some generating set
(and hence all generating sets) is rational.  As already mentioned, we denote the class of all semigroups with rational word problem by $\rat$.

For any finite semigroup, an AFSA recognising the word problem can be constructed using the semigroup's (right) Cayley graph \cite[Theorem~4.2]{NPR}.
Some obvious examples of infinite semigroups in $\rat$ are the free semigroup $A^+$ and the 
free monoid $A^*$ on a finite set $A$ \cite[Example~4.3]{NPR}.
The word problem of $A^*$ is recognised by an automaton with alphabet $A$, a single initial and final state $q_0$, and transition relation 
$\Delta = \{(q_0,a,a,q_0) \mid a\in A\}$.  
Semigroups in $\rat$ need not be finitely presentable:
Neunh\"offer, Pfeiffer and Ru\v{s}kuc give as an example the semigroup with presentation $\langle a,b \mid (ab^na = aba)_{n\geq 2} \rangle$
\cite[Example~4.4]{ NPR}.

Some examples of semigroups not in $\rat$ are: infinite groups, free commutative semigroups 
and the bicyclic monoid $B = \operatorname{Mon} \langle b,c \mid bc=1 \rangle$ \cite[Theorem~7.4 and Lemmas~4.5 and 4.6]{NPR}.

\section{Monogenic inverse semigroups}\label{mono}

Let $S$ be a semigroup and $u\in S$.  An element $v\in S$ is an \emph{inverse} of $u$ if $uvu = u$ and $vuv = v$.
A semigroup is \emph{regular} if every element has at least one inverse, and \emph{inverse} if every element has 
a \emph{unique} inverse.  Inverse semigroups capture 
the idea of `partial symmetry', much as groups capture the idea of symmetry.
Just as every group is isomorphic to a subgroup of the symmetric group on some set, every inverse semigroup is isomorphic to 
a subgroup of the \emph{full symmetric inverse semigroup} on some set: this is the semigroup consisting of all partial 
bijections on the set (under composition) \cite{Law}.

We will only need to know about the monogenic inverse semigroups in order to prove our result.
An inverse semigroup is \emph{monogenic} if it is generated (as an inverse semigroup), by a single element.
We denote the inverse semigroup generated by an element $u$ by $[u]$.  In $[u]$, the element $u$ has a unique inverse
$v$, and $[u]$ is the semigroup generated by $\{u,v\}$.

The remainder of this section consists mainly of a summary of the results we need on the classification of monogenic inverse semigroups.
We will follow the approach of Preston \cite{Pre}.  The classification is also due independently to Conway,
Duncan and Paterson \cite{CDP}, as Preston notes in his paper.

Preston classified the monogenic inverse semigroups by considering their representations as semigroups of bijections.
We need to introduce his terminology (originally due to Munn) for certain types of bijections.

A \emph{finite link of length $s$} is a mapping of the form $a_i\mapsto a_{i+1}$ for $1\leq i\leq s-1$, 
where $a_1,\ldots,a_s$ are distinct elements.  
A \emph{forward link} is a mapping of the form $a_i\mapsto a_{i+1}$ for $i\in \N$, where $a_1, a_2, \ldots$ is a countably infinite 
sequence of distinct elements.  A \emph{backward link} is the inverse of a forward link.

Any two bijections $\lambda: A\rightarrow B$ and $\mu: C\rightarrow D$ such that $A\cap C = A\cap D = B\cap C = B\cap D = \emptyset$
are called \emph{strongly disjoint}.

\begin{lemma} \label{PreL3} {\rm \cite[Lemma~3]{Pre}}
Let $u$ be the union of the strongly disjoint forward links $\lambda_i$, $i\in I$.  Let $\lambda$ be any (specific)
one of these links.  Then $[u]$ is isomorphic to $[\lambda]$.
\end{lemma}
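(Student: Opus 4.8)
The plan is to write down the obvious candidate isomorphism and verify it by reducing everything to a statement about the action of words on a single link. Recall that $[u]$ is the semigroup generated by $u$ and its unique inverse $v$, so every element of $[u]$ has the form $w(u,v)$ for some nonempty word $w$ in two letters, evaluated with the first letter set to $u$ and the second to $v$; likewise every element of $[\lambda]$ has the form $w(\lambda,\lambda^{-1})$. Since the $\lambda_i$ are strongly disjoint, inverting $u=\bigcup_{i\in I}\lambda_i$ inverts each link, giving $v=\bigcup_{i\in I}\lambda_i^{-1}$. I would then define $\Phi:[u]\to[\lambda]$ by $\Phi(w(u,v))=w(\lambda,\lambda^{-1})$; this is manifestly a surjective homomorphism respecting inverses once it is shown to be well defined, and the whole lemma reduces to the single biconditional that, for nonempty words $w,w'$, one has $w(u,v)=w'(u,v)$ as partial bijections if and only if $w(\lambda,\lambda^{-1})=w'(\lambda,\lambda^{-1})$.

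First I would establish a block decomposition. Write $B_i$ for the set of points moved by $\lambda_i$, namely the sequence underlying it. Strong disjointness says precisely that the $B_i$ are pairwise disjoint, and each of $\lambda_i$ and $\lambda_i^{-1}$ maps $B_i$ into $B_i$. Consequently both $u$ and $v$ preserve the partition $\{B_i\}$, and hence so does any product of them: an easy induction on word length shows that for every nonempty word $w$ the partial bijection $w(u,v)$ restricts on $B_i$ to $w(\lambda_i,\lambda_i^{-1})$ and is empty off $\bigcup_{i\in I} B_i$. Thus $w(u,v)=\bigcup_{i\in I}w(\lambda_i,\lambda_i^{-1})$ is a disjoint union, which gives immediately that $w(u,v)=w'(u,v)$ if and only if $w(\lambda_i,\lambda_i^{-1})=w'(\lambda_i,\lambda_i^{-1})$ for every $i\in I$.

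It remains to see that equality of the restrictions to one link forces equality on all of them, and this is where the uniformity of forward links enters. For each $i$ let $\phi_i$ be the bijection from the sequence underlying $\lambda$ to $B_i$ matching the two sequences term by term; then $\phi_i$ conjugates $\lambda$ to $\lambda_i$ and $\lambda^{-1}$ to $\lambda_i^{-1}$, so $w(\lambda_i,\lambda_i^{-1})=\phi_i\,w(\lambda,\lambda^{-1})\,\phi_i^{-1}$ for every word $w$. Conjugation by a bijection is injective, so $w(\lambda_i,\lambda_i^{-1})=w'(\lambda_i,\lambda_i^{-1})$ holds for one index $i$ if and only if $w(\lambda,\lambda^{-1})=w'(\lambda,\lambda^{-1})$, and hence if and only if it holds for all indices. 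Combining this with the block decomposition yields the required biconditional, so $\Phi$ is a well-defined bijective homomorphism and therefore an isomorphism. I expect the main obstacle to be purely one of bookkeeping: stating the block decomposition and the conjugation cleanly enough that the chain of equivalences passes through without case analysis, and in particular verifying that $w(u,v)$ genuinely acts coordinate-wise on the blocks rather than leaking between them — this is exactly what strong disjointness, as opposed to mere disjointness of the links, is there to guarantee.
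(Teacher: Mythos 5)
The paper does not actually prove this lemma --- it is quoted verbatim from Preston \cite[Lemma~3]{Pre} and used as a black box, so there is no in-paper argument to compare yours against. Judged on its own merits, your proof is correct and complete. The two ingredients are exactly right: strong disjointness (not mere disjointness) guarantees that $u$ and $u^{-1}$ preserve the blocks $B_i$, so an induction on word length gives $w(u,u^{-1})=\bigcup_{i\in I}w(\lambda_i,\lambda_i^{-1})$ as a disjoint union of partial bijections, reducing equality in $[u]$ to simultaneous equality on all blocks; and the term-by-term bijections $\phi_i$ conjugating $\lambda$ to $\lambda_i$ show that equality on one block is equivalent to equality on every block, since conjugation by a (total) bijection is an injective homomorphism of symmetric inverse semigroups. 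Together these give the biconditional $w(u,u^{-1})=w'(u,u^{-1})\iff w(\lambda,\lambda^{-1})=w'(\lambda,\lambda^{-1})$, which simultaneously yields well-definedness, injectivity and (with surjectivity and multiplicativity being immediate from the definition of $\Phi$ on words) the isomorphism. One pedantic point worth making explicit if you write this up: for forward links the image of each $\lambda_i$ is contained in its domain, so strong disjointness of the links is indeed equivalent to pairwise disjointness of the underlying sets $B_i$, which is the form of the hypothesis your block decomposition actually uses; and the index set $I$ is nonempty because $\lambda$ is one of the links, so the ``for one $i$ iff for all $i$'' step is not vacuous.
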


\begin{lemma} \label{PreT3} {\rm \cite[Theorem~3]{Pre}}
The inverse semigroup generated by a forward link or a backward link is isomorphic to the bicyclic monoid.
\end{lemma}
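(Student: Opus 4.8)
The plan is to exhibit an explicit isomorphism between $[\lambda]$ and the bicyclic monoid $B = \operatorname{Mon}\langle b,c\mid bc=1\rangle$ in the case where $\lambda$ is a forward link; the backward-link case is then immediate, since a backward link is $\lambda^{-1}$ and $[\lambda^{-1}] = [\lambda]$. Write $\lambda: a_i \mapsto a_{i+1}$ for $i\in\N$, so that $\lambda^{-1}: a_{i+1}\mapsto a_i$ is the associated backward link, and recall that $[\lambda]$ is the semigroup generated by $\{\lambda, \lambda^{-1}\}$. First I would compute the two products of the generators: $\lambda^{-1}\lambda$ is the identity partial bijection on the whole of $\{a_1, a_2, \ldots\}$, while $\lambda\lambda^{-1}$ is the identity restricted to $\{a_2, a_3, \ldots\}$, a proper idempotent. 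Since the domain and range of every element of $[\lambda]$ lie inside $\{a_1, a_2, \ldots\}$, the first product acts as a two-sided identity $e$ on all of $[\lambda]$, so $[\lambda]$ is in fact a monoid.

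With these computations in hand, the defining relation of $B$ is visibly satisfied by the assignment $b\mapsto \lambda^{-1}$, $c\mapsto \lambda$, $1 \mapsto e$, because $\lambda^{-1}\lambda = e$. By the universal property of the presentation this yields a well-defined monoid homomorphism $\phi: B \to [\lambda]$, which is surjective because $\lambda$ and $\lambda^{-1}$ generate $[\lambda]$. It remains to prove that $\phi$ is injective. Every element of $B$ has a unique normal form $c^i b^j$ with $i,j\geq 0$, so every element of $[\lambda]$ has the form $\lambda^i(\lambda^{-1})^j$; I would read off this partial bijection directly as $a_k \mapsto a_{k+i-j}$ with domain $\{a_{j+1}, a_{j+2}, \ldots\}$.

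Injectivity then reduces to the observation that the partial bijection $\lambda^i(\lambda^{-1})^j$ determines the pair $(i,j)$ uniquely: its domain is the half-line beginning at $a_{j+1}$, which recovers $j$, and its net index-shift is $i-j$, which together with $j$ recovers $i$. Hence distinct normal forms have distinct images, $\phi$ is a bijection, and $[\lambda]\cong B$. The only real care needed --- and the step most prone to error --- is the bookkeeping: fixing a composition convention for partial bijections and checking that it is $\lambda^{-1}\lambda$ (rather than $\lambda\lambda^{-1}$) that equals the full identity, so that the relation $bc=1$ is matched to the correct product and the normal-form computation comes out with the domains and shifts as claimed. Everything else is routine once the conventions are pinned down.
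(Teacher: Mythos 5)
Your proof is correct, but there is nothing in the paper to compare it against: the paper does not prove this lemma at all, quoting it directly from Preston \cite[Theorem~3]{Pre}. What you have done is supply a self-contained argument for an imported result, and the argument you give is the natural one. The skeleton is sound: with partial maps acting on the left, $e=\lambda^{-1}\lambda$ is the identity map on all of $\{a_1,a_2,\ldots\}$, hence a two-sided identity for $[\lambda]$; the assignment $b\mapsto\lambda^{-1}$, $c\mapsto\lambda$ respects the relation $bc=1$ and so induces a monoid homomorphism $\phi\colon B\to[\lambda]$, surjective because $\{\lambda,\lambda^{-1}\}$ generates $[\lambda]$ as a semigroup (and $e$ is itself a product of these generators, so the semigroup and submonoid they generate coincide); and injectivity follows from the explicit computation that $\phi(c^ib^j)=\lambda^i(\lambda^{-1})^j$ is the shift $a_k\mapsto a_{k+i-j}$ with domain $\{a_{j+1},a_{j+2},\ldots\}$, from which $(i,j)$ is recovered. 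You also correctly isolate the one genuine pitfall, the composition convention: under the opposite (left-to-right) convention, common in semigroup theory, the roles of $\lambda^{-1}\lambda$ and $\lambda\lambda^{-1}$ swap and the assignment must become $b\mapsto\lambda$, $c\mapsto\lambda^{-1}$; either way the proof goes through. One small refinement: your injectivity step needs only the \emph{existence} of the normal forms $c^ib^j$ in $B$ (every word over $\{b,c\}$ reduces to one, since applying $bc\to 1$ strictly shortens words), not their uniqueness, so you need not cite uniqueness as a known fact --- your domain-and-shift computation in fact reproves it.
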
 

Preston established that a monogenic inverse semigroup can be classified into one of the following isomorphism types,
where $r$ is a nonnegative integer and $s$ is either a positive integer or $\infty$:

Type $(r,s)$: isomorphic to $[u]$, where $u$ is the strongly disjoint union of a finite link of length $r$
and a permutation of order $s$.  The semigroup $\langle u\rangle$ is a monogenic semigroup of index $r$ and period $s$.

Type $(r, \Fwd)$: isomorphic to $[u]$, where $u$ is the strongly disjoint union of a finite link of length $r$ and a forward link.

Type $FI$: a free monogenic inverse semigroup.

\begin{theorem} \label{PreT7} {\rm \cite[Theorem~7]{Pre}}
Let $[u]$ be a monogenic inverse semigroup, generated by $u$.  Then $u$ is one of the types $(r,s)$, $(r,\Fwd)$, $FI$.
Moreover, these isomorphism types are distinct.
\end{theorem}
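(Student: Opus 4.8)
The plan is to prove both assertions through the \emph{representation} of $[u]$ as an inverse semigroup of partial bijections. By the representation theorem quoted above (every inverse semigroup embeds in a full symmetric inverse semigroup), I may take $u$ to be a single one-one partial map on a set $X$. Since $u$ is injective, the digraph on $X$ with an edge $x\to u(x)$ whenever $u(x)$ is defined has in-degree and out-degree at most $1$ at every vertex, so its connected components are of exactly five kinds: finite links, forward links, backward links, two-way infinite links, and finite cycles (the last being the \emph{permutations of finite order}). Thus $u$ is a strongly disjoint union of links of these five types, and the whole problem reduces to reading off $[u]$ from this multiset of components.

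Step two is a sequence of \emph{reduction lemmas}, each proved by exhibiting an explicit isomorphism in the manner of Lemma~\ref{PreL3}, whose combined effect is to collapse the component multiset to a canonical form. First, parallel links of the same kind merge: any family of strongly disjoint forward links collapses to one (this is Lemma~\ref{PreL3}), its mirror image collapses backward links, and a union of finite cycles of orders $s_1,s_2,\dots$ collapses to a single cycle of order $\operatorname{lcm}(s_i)$, since $u$ restricted to the cycles generates a cyclic group of that order. Next, \emph{dominated} components are absorbed: a two-way infinite link is swallowed by any forward (or backward) link, leaving the bicyclic contribution of the latter, and one verifies likewise that the finite links combine, up to isomorphism, into a single finite link of some length $r$. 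After these reductions $u$ is a strongly disjoint union of at most one finite link, of length $r$, together with exactly one ``infinite part'', which is one of: nothing or a single finite cycle (matching type $(r,s)$, $s<\infty$); a two-way infinite link, i.e.\ a permutation of infinite order (matching type $(r,\infty)$); a forward link alone (matching type $(r,\Fwd)$, and isomorphic to the bicyclic monoid when $r=0$ by Lemma~\ref{PreT3}); or a strongly disjoint forward-and-backward pair (matching type $FI$). Reading off these canonical forms establishes the first assertion.

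For the ``moreover'' clause I would separate the types by invariants intrinsic to the abstract inverse semigroup. First, $[u]$ is \emph{finite} exactly for the types $(r,s)$ with $s<\infty$; for these the one-generated subsemigroup $\langle u\rangle$ (positive powers only) is monogenic of index $r$ and period $s$, recovering both parameters and distinguishing these types from one another. Among the \emph{infinite} semigroups $[u]$ the next invariant is the semilattice of idempotents $E([u])$: it is \emph{finite} precisely for type $(r,\infty)$, since there the infinite part of $u$ is a permutation, so the idempotents $u^{-n}u^{n}$ and $u^{n}u^{-n}$ stabilise on it and only the finite link of length $r$ contributes idempotents, while $\langle u\rangle$ remains infinite. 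For the two remaining types $E([u])$ is infinite, and these are told apart by its shape: for type $(r,\Fwd)$ it is eventually a single descending chain (the bicyclic chain of the forward link, as in Lemma~\ref{PreT3}) carrying a finite decoration of depth $r$ near the top, whereas for $FI$ it contains a copy of $\N\times\N$ and hence has incomparable elements arbitrarily far down. The parameters $r$ are then read off from the finite part of $E([u])$ in each case.

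The main obstacle is the freeness in the forward-and-backward case: to identify that canonical form with $FI$ one must show that the natural surjection from the free monogenic inverse semigroup onto $[u]$ is injective, i.e.\ that the forward-and-backward partial bijection is a \emph{faithful} representation. I would do this by invoking Munn's normal form for elements of the free monogenic inverse semigroup and checking that distinct normal forms act as distinct partial bijections on the forward-and-backward structure, the $\N\times\N$ idempotent semilattice noted above serving as the model case. The absorption lemmas of Step two are routine in spirit but genuinely delicate in execution, since for each mixed combination of component types one must confirm that no unexpected coincidences among products of $u$ and $u^{-1}$ are created; this is where the bulk of the bookkeeping lies.
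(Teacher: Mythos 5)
First, note that the paper does not actually prove this statement: Theorem~\ref{PreT7} is quoted verbatim from Preston \cite{Pre}, so there is no in-paper proof to compare against. Your strategy---represent $u$ as a single partial bijection, decompose its graph into strongly disjoint links and cycles, and collapse the component multiset by reduction lemmas in the style of Lemma~\ref{PreL3}---is essentially Preston's own representation-theoretic route, and your treatment of the forward-and-backward pair (faithfulness via normal forms) and of the distinctness clause (finiteness, then the semilattice $E([u])$, which is finite for type $(r,\infty)$, eventually a chain for type $(r,\Fwd)$, and contains arbitrarily large antichains for $FI$) is sound in outline.

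There is, however, a genuine gap: your reduction lemmas silently assume that the component multiset is bounded. The claim that ``the finite links combine, up to isomorphism, into a single finite link of some length $r$'' is false when $u$ has infinitely many finite links of unbounded lengths. In that case no single link dominates, and in fact $[u]\cong FI$: an element $(-l,n,m)$ of the model $F$ acts on a link $a_1,\dots,a_r$ of length $r$ with nonempty domain exactly when $l+n<r$, namely as $a_i\mapsto a_{i+m}$ on $\{a_i : l+1\leq i\leq r-n\}$, so the triple can be recovered from the action on any sufficiently long link; hence the canonical surjection $FI\to[u]$ is injective. Consequently, whenever the finite-link lengths are unbounded, $[u]\cong FI$ \emph{regardless of what other components occur}, and your procedure (collapse finite links first, then read off the infinite part) would misclassify, say, ``unboundedly long finite links plus one forward link'' as type $(r,\Fwd)$ when it is really $FI$. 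The same issue appears in your cycle reduction: infinitely many finite cycles of unbounded orders admit no ``single cycle of order $\operatorname{lcm}(s_i)$''; they constitute a permutation of infinite order and must be routed to type $(\cdot,\infty)$. A correct proof must organise the case analysis by boundedness: $FI$ occurs exactly when a forward and a backward link coexist \emph{or} the finite links are unbounded; type $(r,\Fwd)$ when exactly one of a forward/backward link occurs and the finite links are bounded with maximum $r$; type $(r,\infty)$ when the permutation part (two-way links and cycles together) has infinite order and no one-way links occur; and type $(r,s)$, $s<\infty$, otherwise. This is not a cosmetic repair, since unbounded component families genuinely arise (e.g.\ in faithful representations of $FI$ itself), and as written your argument derives a false intermediate claim about them.
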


This allows us to conclude the existence of one of a small list of subsemigroups in any monogenic inverse semigroup.

\begin{corollary} \label{submono}
Let $[u]$ be a monogenic inverse semigroup, generated by $u$.  Then one of the following holds.
\begin{enumerate}
\item $u$ is a periodic element;
\item $[u]$ has an infinite cyclic subgroup;
\item $[u]$ has a subsemigroup isomorphic to the bicyclic monoid;
\item $[u]$ is the monogenic free inverse semigroup.
\end{enumerate}
\end{corollary}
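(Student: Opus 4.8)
The plan is to prove the corollary by a straightforward case analysis on the classification supplied by Theorem \ref{PreT7}: every monogenic inverse semigroup $[u]$ has type $(r,s)$, $(r,\Fwd)$ or $FI$, and I will show that these three possibilities land in cases (i)/(ii), (iii) and (iv) respectively. The type $FI$ case is immediate, being literally case (iv). The type $(r,s)$ case splits according to the period $s$: when $s$ is a finite positive integer, the associated monogenic semigroup $\langle u\rangle$ has finite index $r$ and finite period $s$, hence is finite, so $u$ is a periodic element and we are in case (i); when $s=\infty$ I will produce an infinite cyclic subgroup, giving case (ii).

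First I would record the elementary but crucial observation underlying both infinite cases. If $u$ is the strongly disjoint union of a finite link $\ell$ of length $r$ and a second bijection $\mu$ (a permutation or a forward link), then $\ell^r$ is empty, so for every $k\geq r$ the power $u^k$ coincides with $\mu^k$. Passing from $u$ to $u^r$ therefore discards the finite link entirely and leaves only the infinite-order part of $u$, while keeping us inside $[u]$, since $u^r,u^{-r}\in[u]$.

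For type $(r,\infty)$, the element $\mu$ is a permutation of infinite order, so $u^r=\mu^r$ is a genuine permutation of infinite order on its support $P$. Setting $e=u^r u^{-r}$ yields an idempotent equal to the identity on $P$, which acts as a two-sided identity for $u^r$ and $u^{-r}$; hence $\{\,(u^r)^n : n\in\Z\,\}$ is a subgroup of $[u]$ isomorphic to $\Z$, giving case (ii). For type $(r,\Fwd)$, the map $\mu$ is a forward link and $u^r=\mu^r$ sends $b_i\mapsto b_{i+r}$ along the forward sequence; this decomposes into $r$ strongly disjoint forward links, one per residue class modulo $r$. Applying Lemma \ref{PreL3} to this union and then Lemma \ref{PreT3} shows that $[u^r]$, an inverse subsemigroup of $[u]$, is isomorphic to the bicyclic monoid, giving case (iii).

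The main obstacle is the presence of the finite link in the two infinite types, and the type $(r,\Fwd)$ case in particular. One cannot invoke Lemma \ref{PreT3} directly for $u$, because $u$ is not a forward link but a forward link with a finite link attached, and likewise the permutation in type $(r,\infty)$ is entangled with a finite link. The device of replacing $u$ by $u^r$ to annihilate the finite link is precisely what clears this difficulty, and the only remaining non-obvious point is recognising $\mu^r$ as a strongly disjoint union of $r$ forward links so that Lemma \ref{PreL3} applies.
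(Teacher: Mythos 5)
Your proposal is correct and follows essentially the same route as the paper: invoke Theorem~\ref{PreT7}, kill the finite link by passing to $u^r$ so that $u^r=\sigma^r$, and then recognise $[u^r]$ as either an infinite cyclic group or, via Lemmas~\ref{PreL3} and~\ref{PreT3}, the bicyclic monoid. Your write-up is somewhat more explicit than the paper's (spelling out the identity $u^ru^{-r}$ in the $(r,\infty)$ case and the decomposition of $\sigma^r$ into $r$ strongly disjoint forward links, one per residue class), but the argument is the same.
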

\begin{proof}
If $[u]$ is of Type $(r,s)$, then $u^r = u^{r+s}$.  
If $[u]$ is of Type $(r,\infty)$ or $(r, \Fwd)$, then without loss of generality $u$ is 
a disjoint union $\lambda \cup \sigma$, where $\lambda$ is a finite link of length $r$ and $\sigma$ is 
a permutation of infinite order in the case of Type $(r,\infty)$, or a forward link in the case of Type $(r, \Fwd)$. 
Since $\lambda^t$ is the empty transformation for $t\geq r$, we have $u^r = \sigma^r$, and similarly $u^{-r} = \sigma^{-r}$.
So the subsemigroup $[u^r]$ of $[u]$ is generated by either an infinite-order permutation, in which case it is isomorphic to $\Z$, 
or by a finite union of strongly disjoint forward links, in which case it is isomorphic to the bicyclic monoid by 
Lemmas~\ref{PreL3} and~\ref{PreT3}. 
\end{proof}

At the end of \cite[Section~2]{Pre}, Preston defines the following model for the free monogenic inverse semigroup $FI$,
which we shall make use of in the proof of our main theorem.

\[F = \{(-l, n, m) \mid l,n\in \N_0, m\in \Z, 0<n+l, -l\leq m\leq n\}\]
endowed with the multiplication
\[ (-l, n, m) (-l', n', m') = (-l\wedge (m-l'), n\wedge (m+n'), m+m'). \]

\section{Inverse semigroups in $\rat$}\label{proof}

We show that inverse semigroups with rational word problem are finite.  
The following two results mean that it suffices to prove this for monogenic inverse semigroups.

\begin{lemma} \label{fgsub} {\rm \cite[Corollary~5.5]{NPR}}
If $S\in \rat$, then every finitely generated subsemigroup of $S$ is also in $\rat$.
\end{lemma}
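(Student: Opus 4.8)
The plan is to realise $\iota(T,B)$ as the image of $\iota(S,A)$ under rationality-preserving operations. Suppose $T$ is the subsemigroup of $S$ generated by a finite set $B\subseteq S$, and fix a finite generating set $A$ of $S$ for which $\iota(S,A)$ is rational. Since $A$ generates $S$, for each $b\in B$ I would choose a word $w_b\in A^+$ with $\pi_A(w_b)=b$, giving a semigroup morphism $\phi\colon B^+\to A^+$ determined by $\phi(b)=w_b$. As $T$ is a subsemigroup of $S$, for $u,v\in B^+$ one has $u=_T v$ if and only if $u=_S v$, and this holds if and only if $\phi(u)=_S\phi(v)$; thus
\[ \iota(T,B)=\{(u,v)\in B^+\times B^+\mid(\phi(u),\phi(v))\in\iota(S,A)\}=(\phi\times\phi)^{-1}\bigl(\iota(S,A)\bigr). \]

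Next I would recognise this preimage as a composite of rational relations. Writing $g_\phi=\{(x,\phi(x))\mid x\in B^+\}\subseteq B^+\times A^+$ for the graph of $\phi$, a direct check gives
\[ \iota(T,B)=g_\phi\circ\iota(S,A)\circ g_\phi^{-1}. \]
The graph $g_\phi$ is rational, being recognised by a finite transducer that reads each letter $b$ on the first tape and emits the word $w_b$ on the second; its inverse $g_\phi^{-1}$ is rational because swapping the two tape-components in every transition of an AFSA recognises the inverse relation. Given that $\iota(S,A)$ is rational by hypothesis, it then suffices to know that the class of rational relations is closed under composition to conclude that $\iota(T,B)$ is rational, that is, that $T\in\rat$.

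The only substantial ingredient is this closure under composition, and that is where I expect the main difficulty to lie: because an AFSA reads its two tapes asynchronously, composing transducers requires simulating one machine's output stream as the other's input while the reading heads advance independently. I would discharge it by citing the Elgot--Mezei theorem, or the closure results already invoked in \cite{NPR} to prove independence of the generating set. Should a self-contained argument be preferred, I would instead build the composite directly as a single AFSA $\A'$ whose states are triples $(q,\alpha,\beta)$, with $q$ a state of an automaton $\A$ for $\iota(S,A)$ and $\alpha,\beta$ suffixes of the finitely many words $w_b$, read as the unconsumed portions of the current block on each tape. Reading a letter on a tape of $\A'$ loads the corresponding word $w_b$ as that buffer, permitted only when the buffer is empty, while $\epsilon$-moves of $\A'$ drain the buffers in accordance with the transitions of $\A$. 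Finiteness of the state set follows from there being finitely many suffixes of finitely many words, and correctness follows since accepting runs of $\A'$ on $(u,v)$ correspond exactly to accepting runs of $\A$ on $(\phi(u),\phi(v))$; aligning the block boundaries with the left-to-right reading of each tape is the one point that would need care.
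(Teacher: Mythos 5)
Your argument is correct, but note first that the paper contains no proof of this lemma to compare against: it is quoted verbatim from \cite[Corollary~5.5]{NPR}. Judged on its own merits, your proof works. The pullback identity $\iota(T,B)=(\phi\times\phi)^{-1}\bigl(\iota(S,A)\bigr)$ is right, and your freedom to fix \emph{any} finite generating set $A$ with $\iota(S,A)$ rational is legitimate because rationality of the word problem is independent of the generating set (\cite[Corollary~5.4]{NPR}, quoted in Section~\ref{rat} of the paper). The rewriting of the preimage as $g_\phi\circ\iota(S,A)\circ g_\phi^{-1}$ is correct provided composition is read relationally from left to right; under the functional convention the two outer factors must be interchanged, but this is purely notational. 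The one substantial ingredient is, as you correctly identify, closure of rational relations under composition, and your two ways of discharging it are both sound: the Elgot--Mezei theorem applies verbatim (all three relations here live over free monoids), and your direct construction --- states $(q,\alpha,\beta)$ with $\alpha,\beta$ ranging over the finitely many suffixes of the words $w_b$, letters of $B$ loading buffers and $\epsilon$-moves draining them along transitions of $\A$ --- is the standard self-contained proof of the special case needed, made unproblematic by the fact that the AFSA model already permits $\epsilon$-moves on either tape. This substitution-pullback route is also essentially how the cited source proceeds: \cite{NPR} obtain Corollary~5.5 as a consequence of their change-of-generators results, which rest on exactly this kind of homomorphism-respecting automaton construction, so your proposal reconstructs the intended argument rather than diverging from it.
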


\begin{lemma} \label{inforder} {\rm \cite[Theorem~7.1]{NPR}}
Every infinite semigroup with rational word problem has an element of infinite order.
\end{lemma}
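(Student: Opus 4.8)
The plan is to prove the contrapositive: if $S\in\rat$ and every element of $S$ has finite order, then $S$ is finite. Fix a finite generating set $A$ and an AFSA $\A$ with $N=|Q|$ states recognising $\iota(S,A)$, and let $\pi_A\colon A^+\to S$ be the associated projection. Since there are only finitely many words of each length over the finite alphabet $A$, it suffices to exhibit a bound $L$ such that every word in $A^+$ represents the same element of $S$ as some word of length at most $L$; then $S=\pi_A(A^+)$ coincides with the image of the finite set of words of length at most $L$, so $S$ is finite. Equivalently, I would bound the length of the shortest representatives of elements of $S$.

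The first ingredient is a purely algebraic shortening step, needing neither rationality nor the finite-order hypothesis. Suppose $w=a_1\cdots a_n$ has two prefixes representing the same element, say $a_1\cdots a_i =_S a_1\cdots a_j$ with $i<j$. Writing $g=\pi_A(a_1\cdots a_i)$ and $h=\pi_A(a_{i+1}\cdots a_j)$ gives $g=gh$ in $S$, and hence $w =_S a_1\cdots a_i\,a_{j+1}\cdots a_n$, a strictly shorter word for the same element. Thus any shortest representative $w$ is \emph{prefix-reduced}: its $n$ prefix values are pairwise distinct.

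It remains to bound the length of prefix-reduced words, and this is where both rationality and the finite-order hypothesis are essential; neither can be dropped, as the free monoid $A^*$ shows (there every word is prefix-reduced and lengths are unbounded, but every generator has infinite order). I would run $\A$ on the diagonal pair $(w,w)$, which is always accepted, and search for a repeated state across a long computation. The key move is that a loop which consumes letters from only one of the two tapes can be pumped to exponent $0$: this deletes those letters from that tape alone while leaving the other tape reading all of $w$, and the resulting (unequal-length) pair is still accepted, yielding a strictly shorter word $S$-equal to $w$ and contradicting minimality. The finite-order hypothesis is what should supply such one-sided loops: each generator $a$ satisfies $a^{k}=_S a^{k+p}$ for suitable $k,p$, so $\iota(S,A)$ contains accepted pairs of unequal length, forcing $\A$ to read the two tapes out of step.

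The main obstacle is precisely the extraction of a usable one-sided loop from the run on a long prefix-reduced word. A perfectly balanced, lock-step computation, of the kind that recognises the diagonal of $A^*$, has only loops that consume equal infixes from both tapes; these pump to the trivial identity $w'=_S w'$ and give nothing. The heart of the argument is therefore to show that, under the finite-order hypothesis, no sufficiently long prefix-reduced word can be accepted by a computation that stays balanced, so that a one-sided loop, and with it a shortening, must occur. I expect this finite-automaton analysis, tying the unequal-length relations $a^k=_S a^{k+p}$ to the behaviour of $\A$ on an arbitrary long word, to be the technical core of the proof.
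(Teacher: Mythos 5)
Your preliminary reductions are all correct and are the natural first moves: passing to the contrapositive, reducing finiteness of $S$ to a bound on the length of shortest representatives, the prefix-reduction step (if $a_1\cdots a_i =_S a_1\cdots a_j$ with $i<j$ then $w =_S a_1\cdots a_i\,a_{j+1}\cdots a_n$, so shortest representatives have pairwise distinct prefix values), and the observation that a loop consuming input from only one tape in an accepting run of $(w,w)$ can be pumped down to contradict minimality of $w$. Note for context that the paper itself contains no proof of this lemma --- it is imported verbatim as Theorem~7.1 of Neunh\"offer, Pfeiffer and Ru\v{s}kuc --- so your attempt stands or falls on its own.

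It falls, at exactly the point you flag, and the idea you offer to bridge it does not work. You need: for every sufficiently long prefix-reduced $w$, an accepting run of $(w,w)$ must contain a one-sided loop. Your justification is that periodicity puts unequal-length pairs such as $(a^k,a^{k+p})$ into $\iota(S,A)$, ``forcing $\A$ to read the two tapes out of step.'' This is a non sequitur: the behaviour of $\A$ on those other inputs places no constraint whatever on its runs over the input $(w,w)$; a priori $\A$ could process every diagonal pair in lock-step through one part of its state graph while handling unequal-length pairs elsewhere. Moreover, your trichotomy of loops is incomplete. Besides one-sided loops and lock-step loops (which indeed pump trivially), a run can contain loops reading $(x,y)$ with $x$ and $y$ both non-empty but of different lengths or taken from different positions of $w$; writing $w = uxv = u'yv'$, such a loop pumps to the genuinely non-trivial relations $u x^k v =_S u' y^k v'$ for all $k\geq 0$. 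It is precisely these mixed loops that must be either exploited (in combination with relations like $x^a =_S x^{a+b}$ coming from periodicity) or excluded in order to reach a contradiction, and the proposal says nothing about them. As it stands, the claim you defer --- that no long prefix-reduced word admits a ``balanced'' accepting run --- is essentially equivalent to the lemma itself, so what you have is a correct setup together with an honest restatement of the problem, not a proof.
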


\begin{theorem} \label{inverse}
An inverse semigroup has rational word problem if and only if it is finite.
\end{theorem}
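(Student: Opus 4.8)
The plan is as follows. The forward implication is immediate: every finite semigroup lies in $\rat$ by \cite[Theorem~4.2]{NPR}. For the converse, suppose $S$ is an inverse semigroup in $\rat$ and, for contradiction, that $S$ is infinite. By Lemma~\ref{inforder}, $S$ contains an element $u$ of infinite order. Let $[u]$ be the inverse subsemigroup it generates: this is monogenic, and as a semigroup it is generated by the finite set $\{u,u^{-1}\}$, so $[u]\in\rat$ by Lemma~\ref{fgsub}. Since $u$ has infinite order it is not periodic, so by Corollary~\ref{submono} one of the cases (ii)--(iv) must hold. In case (ii), $[u]$ has a subgroup isomorphic to $\Z$; being finitely generated, this subgroup lies in $\rat$ by Lemma~\ref{fgsub}, contradicting the fact that infinite groups are not in $\rat$. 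In case (iii), $[u]$ contains a copy of the bicyclic monoid, which is finitely generated and hence in $\rat$ by Lemma~\ref{fgsub}, contradicting that the bicyclic monoid is not in $\rat$. Everything therefore reduces to ruling out case (iv): it remains to show that the free monogenic inverse semigroup $FI\cong F$ is not in $\rat$, and this last step is where the real work lies.

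To handle $F$ I would use the geometric reading of Preston's model as walks on $\Z$. Writing $a$ for the generator $u$ and $b$ for $u^{-1}$, a nonempty word $w\in\{a,b\}^+$ determines a walk on $\Z$ from $0$, with $a$ a step $+1$ and $b$ a step $-1$. Unwinding the multiplication defining $F$, the word $w$ represents the triple $(\operatorname{lo}(w),\operatorname{hi}(w),\operatorname{end}(w))$, where $\operatorname{end}(w)=|w|_a-|w|_b$ is the final position and $\operatorname{lo}(w)\le 0\le\operatorname{hi}(w)$ are the least and greatest positions visited; one checks that concatenation of words corresponds exactly to Preston's product $(-l\wedge(m-l'),\,n\vee(m+n'),\,m+m')$. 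Consequently $w=_F w'$ if and only if $\operatorname{lo}(w)=\operatorname{lo}(w')$, $\operatorname{hi}(w)=\operatorname{hi}(w')$ and $\operatorname{end}(w)=\operatorname{end}(w')$, so that $\iota(F,\{a,b\})$ is completely described by this triple of integer statistics.

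To extract a contradiction from $F\in\rat$ I would use two elementary closure properties of relations recognised by an AFSA: (a) if $\rho$ is recognised by an AFSA and $R_1,R_2$ are regular languages, then $\rho\cap(R_1\times R_2)$ is recognised by an AFSA, obtained as the product of the given automaton with a deterministic automaton reading $R_i$ on the $i$th tape; and (b) the projection onto the first tape of a relation recognised by an AFSA is a regular language, obtained by deleting the second coordinate of every transition. Now take $R_1=a^+b^+a^+$ and $R_2=a^+$. A direct computation with the triple above shows that, for $i,j,k\ge 1$, the word $a^ib^ja^k$ is $F$-equal to some word in $a^+$ if and only if $j\le i$ and $j\le k$, in which case it equals $a^{\,i-j+k}$ (equality to a word $a^s=(0,s,s)$ forces $\operatorname{lo}=0$, giving $i\ge j$, and $\operatorname{hi}=\operatorname{end}$, giving $k\ge j$). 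Hence the first-tape projection of $\iota(F,\{a,b\})\cap(R_1\times R_2)$ equals $L=\{a^ib^ja^k\mid 1\le j\le i,\ 1\le j\le k\}$. If $F$ lay in $\rat$, properties (a) and (b) would force $L$ to be regular; but $a^Nb^Na^N\in L$ can be pumped down in its first $a$-block to leave $L$, so $L$ is not regular. This contradiction completes case (iv).

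The main obstacle is precisely case (iv): cases (i)--(iii) are dispatched by quoting \cite{NPR} alongside the structure theory of Section~\ref{mono}, whereas $F\in\rat$ has to be refuted directly. The idea that makes this tractable is the translation of $F$-equality into the three walk statistics $\operatorname{lo}$, $\operatorname{hi}$ and $\operatorname{end}$, after which a non-regular language can be exposed by intersecting with a product of regular languages and projecting. I would expect the only delicate points to be the careful verification that Preston's multiplication coincides with concatenation of walks, and a clean statement of the two closure properties for the asynchronous automata as defined in Section~\ref{rat}.
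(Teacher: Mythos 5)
Your proposal is correct, and up to the crucial step it follows the paper exactly: finite semigroups lie in $\rat$ via the Cayley-graph automaton, an infinite $S\in\rat$ has an infinite-order element $u$ by Lemma~\ref{inforder}, the monogenic inverse semigroup $[u]$ lies in $\rat$ by Lemma~\ref{fgsub}, and cases (i)--(iii) of Corollary~\ref{submono} are excluded by periodicity and the results of \cite{NPR} on infinite groups and the bicyclic monoid. Where you genuinely diverge is in showing $FI\notin\rat$. The paper pumps the asynchronous automaton directly: it considers the accepted pairs $(u^nu^{-n}u^n,\,u^n)$, argues that for large $n$ an accepting path must loop while the first tape is inside the middle block $u^{-n}$, pumps that loop to obtain acceptance of $(u^nu^{-(n+i)}u^n,\,u^{n+j})$ with $i\geq 1$, and refutes this equality by the same computation in Preston's model $F$ that underlies your $\operatorname{lo}/\operatorname{hi}/\operatorname{end}$ statistics. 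You instead invoke two closure properties of AFSA-recognisable relations --- intersection with $R_1\times R_2$ for regular $R_1,R_2$, and projection onto the first tape --- to transfer the whole problem to a single tape, where the classical pumping lemma applied to $L=\{a^ib^ja^k \mid 1\leq j\leq i,\ 1\leq j\leq k\}$ finishes the argument. Both routes are sound, and your walk-statistics description of equality in $F$ is the standard one and matches Preston's multiplication (your formula, with $\vee$ in the middle coordinate, silently corrects what is evidently a typo in the paper's statement of the product, whose own worked computation uses max there). What your route buys is modularity: the only automaton-theoretic reasoning is packaged into two standard, reusable closure facts, both correctly provable by the product and transition-deletion constructions you sketch, and the combinatorial core becomes a textbook non-regular language; in particular you avoid the most delicate point of the paper's argument, namely reasoning about where a loop can occur in an asynchronous two-tape run. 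The cost is that you must actually establish properties (a) and (b) (or cite them), whereas the paper's pumping argument is self-contained given only the definition of an AFSA.
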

\begin{proof}
Let $S$ be an infinite inverse semigroup and suppose that $S$ has rational word problem.  By Lemma~\ref{inforder}, $S$ has an element $u$
of infinite order.  By Lemma~\ref{fgsub}, the monogenic inverse semigroup $[u]$, which is the subsemigroup of $S$ generated by $\{u, u^{-1}\}$,
is in $\rat$.

By Corollary~\ref{submono}, $[u]$ must be the monogenic free inverse semigroup, since $u$ is not a periodic element, and a semigroup with rational
word problem cannot contain an infinite subgroup \cite[Theorem~7.4]{NPR} or a subsemigroup isomorphic to the bicyclic monoid $B$ \cite[Lemma~4.6]{NPR}.  
It remains to show that the monogenic free inverse semigroup $FI$ is not in $\rat$.

Let $S = [u]$ be a monogenic free inverse semigroup and suppose that $\iota(S,\{u,u^{-1}\})$ is rational.  
Then there is an asynchronous finite state automaton $\A$ which accepts $(u^n u^{-n} u^n, u^n)$ for every $n\in \N$.
For $n$ greater than the number of states in $\A$, an accepting path for $(u^n u^{-n} u^n, u^n)$ must go into a loop
while reading $u^{-n}$ on the first tape and some portion of $u^n$ on the second tape.  So $\A$ accepts 
$(u^n u^{-(n+i)} u^n, u^{n+j})$ for some $i\in \N$, $j\in \N_0$.  But by calculating in the model $F$, we get
\begin{align*}
u^n u^{-(n+i)} u^n &= \left(0, n, n\right) \left(-(n+i), 0, -(n+i) \right) \left(0, n, n\right) \\
&= (-i, n, -i) (0, n, n) = (-i, n, n-i), 
\end{align*}
which is not equal to $u^k$ for any $k\in \N$, since $i>0$.  Hence $u^n u^{-(n+i)} u^n\neq u^{n+j}$ in $S$
for any $i\in \N$, $j\in \N_0$, contradicting the fact that $\A$ recognises $\iota_S(\{u, u^{-1}\})$.
Therefore $S$ does not have rational word problem.
\end{proof}

A different generalisation of Anisimov's Theorem to inverse semigroups has been obtained by Kambites \cite{Kam}.
He proved that an inverse semigroup $S$ has regular \emph{idempotent problem} (the set of all words over a generating set for $S$ which represent
idempotents in $S$) if and only if $S$ is finite, answering a question of Gilbert and Noonan Heale \cite{GNH}.

\section{Anisimov's Theorem for regular semigroups}

Since writing our respective notes, the author and Kambites have been informed by Carl Rupert that both versions of 
Anisimov's Theorem are also true for regular semigroups in general.  Semigroups with rational word problem are Kleene 
(see for example \cite[Corollary~8.4.3]{Pfe}),
and Rupert showed \cite{Rup} that regular Kleene semigroups are periodic, from which it follows by Lemma~\ref{inforder}
that regular semigroups with rational word problem are finite.  Rupert's proof of the idempotent problem version of 
Anisimov's Theorem for regular semigroups currently exists only in private communication.

\section*{Acknowledgement}
The author was funded by an EPSRC grant EP/H011978/1.

\end{document}